\documentclass[a4paper]{article}

\usepackage{graphicx}
\usepackage{amsmath}
\usepackage{mathtools}
\usepackage{amsfonts,amssymb}
\usepackage{algorithmic}
\usepackage{algorithm}
\usepackage{array}
\usepackage{bm}
\usepackage[compress]{cite}
\usepackage[font=footnotesize]{caption}

\addtolength{\oddsidemargin}{-.575in}
\addtolength{\evensidemargin}{-.575in}
\addtolength{\textwidth}{1.25in}
\addtolength{\topmargin}{-.575in}
\addtolength{\textheight}{1.25in}

\captionsetup[algorithm]{font=footnotesize}

\newtheorem{lemma}{\sc Lemma}[section]
\newtheorem{theorem}[lemma]{\sc Theorem}
\newtheorem{proposition}[lemma]{\sc Proposition}

\newtheorem{assumption}[lemma]{\bf Assumption}

\def\QED{~\rule[-1pt]{5pt}{5pt}\par\medskip}
\newenvironment{proof}{{\it Proof:\ }}{ \hfill \QED}

\floatname{algorithm}{Procedure}

\DeclareMathOperator*{\argmin}{arg\;min}
\newcommand{\h}{\hspace{.0001in}}
\renewcommand{\matrix}[2]{\left[\begin{array}{#1} #2 \end{array}\right] }

\begin{document}

\title{Distributed MPC Via Dual Decomposition and Alternating Direction Method of Multipliers\thanks{This work was supported in part by the Swedish Research Council, the Swedish Foundation for Strategic Research, and the Knut and Alice Wallenberg Foundation. }}

\author{Farhad Farokhi, Iman Shames, and Karl H. Johansson \thanks{ACCESS Linnaeus Center, School of Electrical Engineering, KTH Royal Institute of Technology, Emails:\{farokhi,imansh,kallej\}@ee.kth.se}}

\date{}

\maketitle

\abstract{A conventional way to handle model predictive control (MPC) problems distributedly is to solve them via dual decomposition and gradient ascent. However,  at each time-step, it might not be feasible to wait for the dual algorithm to converge. As a result, the algorithm might be needed to be terminated prematurely. One is then interested to see if the solution at the point of termination is close to the optimal solution and when one should terminate the algorithm if a certain distance to optimality is to be guaranteed. In this chapter, we look at this problem for distributed systems under general dynamical and performance couplings, then, we make a statement on validity of similar results where the problem is solved using alternating direction method of multipliers. }

\section{Introduction}
Model predictive control (MPC) can be used to control dynamical systems with input and output constraints  while ensuring the optimality of the performance of the system with respect to cost functions~\cite{rawlings2000tutorial,camponogara2002distributed,mattingley2011receding}. Typically, the way that the control input is calculated at each time-step is via applying the first control in a sequence obtained from solving an optimal control problem over a finite or infinite horizon. The optimal problem is  reformulated at each time step based on the available measurements at that time step. Traditionally, a full model of the system is required to solve the MPC problem and all the control inputs are calculated centrally. However, in large-scale interconnected systems, such as power systems~\cite{4682711,richalet1978model,negenborn2010intelligent}, water distribution systems~\cite{negenborn2010intelligent,zhang2008model}, transport systems~\cite{negenborn2008multi}, manufacturing systems~\cite{garcia1989model}, biological systems~\cite{hovorka2004nonlinear}, and irrigation systems~\cite{kearney2011model}, the assumption on knowing the whole model and calculating all the inputs centrally is often not realistic. Recently, much attention has been paid to solve MPC problems in a distributed way~\cite{motee2003optimal,jia2002min,dunbar2007distributed,venkat2005stability,Wakasa4739012,giselsson2010distributed,venkat2007distributed}.
The problem of distributed model predictive control using dual decomposition was considered in~\cite{Wakasa4739012}. However, in solving any optimization problem when using dual decomposition methods the convergence behaviors of dual iterations does not necessarily coincides to that of the primal formulation. Hence, the authors in~\cite{giselsson2010distributed} presented a distributed MPC algorithm using dual decomposition accompanied with a stopping criterion to guarantee a pre-specified level of performance. The authors only addressed linear coupled dynamics with separable cost functions.

In this chapter, specifically, we formulate the problem of achieving a control objective cooperatively by a network of dynamically coupled systems under constraints using MPC. We consider discrete-time nonlinear control systems. We are interested in casting the problem in a distributed way and we consider the case where the cost function associated with each system  is not necessarily decoupled from the rest. Additionally, we are not limiting our formulation to the case where the coupling in the cost function is the same as the coupling in the dynamics~\cite{Wakasa4739012,giselsson2010distributed}. We note that a natural method to solve such problems is to use dual-decomposition at each time-step and solve the problem iteratively. However, a problem that in implementing the dual solution iterations is that generally one cannot make any statement on how close the solution is to the optimum if the dual algorithm is terminated prematurely. That is, there is no termination guideline to ensure that the variables obtained from the dual algorithm are within an acceptable bound for the primal problem. In this chapter, we propose such termination guidelines that indicate how many iterations are needed to ensure a certain suboptimality guarantee, i.e.,~distance to optimality. We extend the results of~\cite{giselsson2010distributed} and present a more general frameworks, i.e., nonlinear interconnected dynamics and cost functions. A way to achieve better numerical properties for solving distributed MPC is to apply alternating direction method of multipliers (ADMM)~\cite{Boyd11}. Recently, optimal control synthesis and MPC via ADMM has gained some attention~\cite{bo2012,linfarjovACC12}. However, to the best of our knowledge, no attention has been paid to distributed MPC using ADMM. Hence, we show how to address distributed MPC via ADMM and how guarantees on termination errors can be obtained. We illustrate the applicability of our results on a formation of nonholonomic agents which employ distributed MPC to acquire a desired formation.

The outline of this chapter is as follows. In Section \ref{sec:prob} we formally define the problem of interest in this paper and particularly present the plant model and the performance criteria we consider. In Section \ref{sec:main} the suboptimality guarantee for the dually decomposed MPC is presented. Additionally, we make some comments on finding similar guarantee when the problem is solved via ADMM. We show the applicability of the results obtained here to the problem of controlling a formation of nonholonomic vehicles in Section \ref{sec:sim}. Finally, some concluding remarks are presented in Section~\ref{sec:conclusion}.

\subsection{Notation}
The sets of real and integer numbers are denoted by $\mathbb{R}$ and $\mathbb{Z}$, respectively. For any $n_1,n_2\in\mathbb{Z}\cup\{\pm \infty\}$, we define $\mathbb{Z}_{\geq n_1}^{\leq n_2}=\{n\in\mathbb{Z}\;|\;n_1\leq n\leq n_2\}.$ When $n_2=+\infty$, we use $\mathbb{Z}_{\geq n_1}$. For any $x\in\mathbb{R}$, we also define $\mathbb{R}_{\geq x}=\{y\in\mathbb{R}\;|\;y\geq x\}$. Other sets are denoted by calligraphic letters, such as $\mathcal{X}$ and $\mathcal{E}$.

Each (directed) graph is a pair of sets as $\mathcal{G}=(\mathcal{V},\mathcal{E})$, where $\mathcal{V}$ is the vertex set and $\mathcal{E}$ is the edge set. Each edge in the edge set $\mathcal{E}$ is an ordered pair of vertices, e.g., $(v_1,v_2)\in\mathcal{E}$.

\section{Problem Formulation} \label{sec:prob}
\subsection{Plant Model} \label{subsec:PM}
Let a directed graph $\mathcal{G}^P=(\{1,\dots,N\},\mathcal{E}^P)$ be given. Consider a discrete-time nonlinear control system composed of $N$ subsystems, where, for each $1\leq i\leq N$, subsystem~$i$ can be described in state-space form as
\begin{equation} \label{eqn:sys_dynamics}
\mathbf{x}_i[k+1]=\mathbf{f}_i(\mathbf{x}_i[k],\mathbf{v}_i[k];\mathbf{u}_i[k]),
\end{equation}
with state vector $\mathbf{x}_i[k]\in \mathcal{X}_i\subseteq \mathbb{R}^{n_i}$ and control input $\mathbf{u}_i[k]\in \mathcal{U}_i\subseteq \mathbb{R}^{m_i}$ for given integers $n_i,m_i\geq 1$. In addition, let $\mathbf{v}_i[k]= (\mathbf{x}_j[k])_{(j,i)\in\mathcal{E}^P} \in \mathbb{R}^{\sum_{(j,i)\in\mathcal{E}^P} n_j}$ denote the tuple of the state vector of all the subsystems that can influence subsystem~$i$ through its dynamics. For each $1\leq i\leq N$, mapping $\mathbf{f}_i:\mathcal{X}_i \times \prod_{(j,i)\in\mathcal{E}^P}\mathcal{X}_j \times \mathcal{U}_i \rightarrow \mathcal{X}_i$ determine the trajectory of subsystem~$i$ given the initial condition $x_i[0]\in\mathcal{X}_i$ and the inputs.

\subsection{Performance Criterion}
Let a directed graph $\mathcal{G}^C=(\{1,\dots,N\},\mathcal{E}^C)$ be given. For each time-instance $k\in\mathbb{Z}_{\geq 0}$, we introduce the running cost function
$$
J_k\left((\mathbf{x}_i[k])_{i=1}^N;(\mathbf{u}_i[k:+\infty])_{i=1}^N\right)=\sum_{t=k}^\infty\sum_{i=1}^N \boldsymbol{\ell}_i(\mathbf{x}_i[t],\mathbf{w}_i[t];\mathbf{u}_i[t]),
$$
where $\mathbf{w}_i[k]=(\mathbf{x}_j[k])_{(j,i)\in\mathcal{E}^C}\in\mathbb{R}^{\sum_{(j,i)\in\mathcal{E}^C} n_j}$ denotes the tuple of the state vector of all the subsystems that can influence subsystem~$i$ through its cost. Note that for the described dynamical system, given the control sequence $(\mathbf{u}_i[k:+\infty])_{i=1}^N$ and boundary condition $(\mathbf{x}_i[k])_{i=1}^N$, the trajectory of the system $(\mathbf{x}_i[k:+\infty])_{i=1}^N$ is uniquely determined by the described system dynamics in~(\ref{eqn:sys_dynamics}). Hence, we do not explicitly show the dependency of the cost function $J_k\left((\mathbf{x}_i[k])_{i=1}^N;(\mathbf{u}_i[k:+\infty])_{i=1}^N\right)$ to the trajectory $(\mathbf{x}_i[k+1:+\infty])_{i=1}^N$. We make the following standing assumption concerning the cost function which is crucial for proving stability of the origin for the closed-loop system with a MPC controller in feedback interconnection.

\begin{assumption} \label{asm:1} For each $1\leq i\leq N$, $\boldsymbol{\ell}_i:\mathcal{X}_i \times \prod_{(j,i)\in\mathcal{E}^C}\mathcal{X}_j \times \mathcal{U}_i \rightarrow \mathbb{R}_{\geq 0}$ is a mapping such that (a)~$\boldsymbol{\ell}_i(\mathbf{x}_i,\mathbf{w}_i;\mathbf{u}_i)$ is continuous in $\mathbf{x}_i$ for all $\mathbf{x}_i\in\mathcal{X}_i$ and (b)~$\boldsymbol{\ell}_i(\mathbf{x}_i,\mathbf{w}_i;\mathbf{u}_i)=0$ if and only if $\mathbf{x}_i=0$.
\end{assumption}

\subsection{MPC}\label{subsec:mpc}
In each time instance $k\in\mathbb{Z}_{\geq 0}$, the objective of the designer is to solve an infinite-horizon optimal control problem given by
\begin{equation} \label{eqn:MPC1}
\begin{split}
(\hat{\mathbf{u}}_i^*[k:+\infty])_{i=1}^N= \argmin_{ (\hat{\mathbf{u}}_i[k:+\infty])_{i=1}^N} \hspace{.1in}& J_k\left((\mathbf{x}_i[k])_{i=1}^N;(\hat{\mathbf{u}}_i[k:+\infty])_{i=1}^N\right),\\
\mbox{subject to } \hspace{.1in}& \hat{\mathbf{x}}_i[t+1]=\mathbf{f}_i(\hat{\mathbf{x}}_i[t],\hat{\mathbf{v}}_i[t];\hat{\mathbf{u}}_i[t]), 1\leq i\leq N, \forall\; t\in\mathbb{Z}_{\geq k}, \\ &\hat{\mathbf{x}}_i[k]=\mathbf{x}_i[k], \; 1\leq i\leq N, \\ & \hat{\mathbf{x}}_i[t]\in \mathcal{X}_i, \; 1\leq i\leq N, \forall\; t\in\mathbb{Z}_{\geq k}, \\ & \hat{\mathbf{u}}_i[t]\in \mathcal{U}_i, \; 1\leq i\leq N, \forall\; t\in\mathbb{Z}_{\geq k},
\end{split}
\end{equation}
where $(\hat{\mathbf{x}}_i[k:+\infty])_{i=1}^N$ is the state estimate initialized with the state measurement $\hat{\mathbf{x}}_i[k]=\mathbf{x}_i[k]$, for all $1\leq i\leq N$. Note that we use $\hat{\mathbf{x}}_i$ and $\hat{\mathbf{u}}_i$ to emphasize the fact that these variables are forecast variables and are predicted using the systems model. We relax the infinite-horizon optimal control problem in~(\ref{eqn:MPC1}) into a finite-horizon optimal control problem given by
\begin{equation} \label{eqn:MPC2}
\begin{split}
(\hat{\mathbf{u}}_i^*[k:k+T])_{i=1}^N= \argmin_{(\hat{\mathbf{u}}_i[k:k+T])_{i=1}^N} \hspace{.1in}& J_k^{(T)}\left((\mathbf{x}_i[k])_{i=1}^N;(\hat{\mathbf{u}}_i[k:k+T])_{i=1}^N\right),\\
\mbox{subject to } \hspace{.1in}& \hat{\mathbf{x}}_i[t+1]=\mathbf{f}_i(\hat{\mathbf{x}}_i[t],\hat{\mathbf{v}}_i[t];\hat{\mathbf{u}}_i[t]),  1\leq i\leq N, \forall\; t\in\mathbb{Z}_{\geq k}^{\leq k+T}, \\ &\hat{\mathbf{x}}_i[k]=\mathbf{x}_i[k], \; 1\leq i\leq N, \\ & \hat{\mathbf{x}}_i[t]\in \mathcal{X}_i, \; 1\leq i\leq N, \forall\; t\in\mathbb{Z}_{\geq k}^{\leq k+T}, \\ & \hat{\mathbf{u}}_i[t]\in \mathcal{U}_i, \; 1\leq i\leq N, \forall\; t\in\mathbb{Z}_{\geq k}^{\leq k+T},
\end{split}
\end{equation}
where
$$
J_k^{(T)}\left((\mathbf{x}_i[k])_{i=1}^N;(\hat{\mathbf{u}}_i[k:k+T])_{i=1}^N\right)=\sum_{t=k}^{k+T}\sum_{i=1}^N \boldsymbol{\ell}_i(\hat{\mathbf{x}}_i[t],\hat{\mathbf{w}}_i[t];\hat{\mathbf{u}}_i[t]),
$$
and $T\in\mathbb{Z}_{\geq 0}$ denotes the horizon of estimation and control. After solving this optimization problem, subcontroller~$i$ implements $\mathbf{u}_i[k]=\hat{\mathbf{u}}^*_i[k]$, for each $1\leq i\leq N$. Doing so, the overall cost of the system equals
$$
J_0 \left((\mathbf{x}_i[0])_{i=1}^N;(\mathbf{u}_i[0:+\infty])_{i=1}^N\right)=\sum_{t=0}^\infty\sum_{i=1}^N \boldsymbol{\ell}_i(\mathbf{x}_i[t],\mathbf{w}_i[t];\mathbf{u}_i[t]),
$$
where the control sequence $(\mathbf{u}_i[0:+\infty])_{i=1}^N$, as described earlier, is extracted step-by-step from the optimization problem in~(\ref{eqn:MPC2}). For the MPC problem to be well-posed, we make the following standing assumption:

\begin{assumption} \label{asm:2} The optimization problem
\begin{equation*}
\begin{split}
\argmin_{ (\hat{\mathbf{u}}_i[k:+\infty])_{i=1}^N} \hspace{.1in}& \sum_{t=k}^{k+T}\sum_{i=1}^N \boldsymbol{\ell}_i(\hat{\mathbf{x}}_i[t],\hat{\mathbf{w}}_i[t];\hat{\mathbf{u}}_i[t]),\\
\mathrm{subject\hspace{0.5em}to } \hspace{.1in}& \hat{\mathbf{x}}_i[t+1]=\mathbf{f}_i(\hat{\mathbf{x}}_i[t],\hat{\mathbf{v}}_i[t];\hat{\mathbf{u}}_i[t]), \; 1\leq i\leq N, \forall\; t\in\mathbb{Z}_{\geq k}^{\leq k+T}, \\ &\hat{\mathbf{x}}_i[k]=\mathbf{x}_i[k], \; 1\leq i\leq N, \\ & \hat{\mathbf{x}}_i[t]\in \mathcal{X}_i, \; 1\leq i\leq N, \forall\; t\in\mathbb{Z}_{\geq k}^{\leq k+T}, \\ & \hat{\mathbf{u}}_i[t]\in \mathcal{U}_i, \; 1\leq i\leq N, \forall\; t\in\mathbb{Z}_{\geq k}^{\leq k+T},
\end{split}
\end{equation*}
admits a unique global minimizer for all time horizon $T\in\mathbb{Z}_{\geq 0}\cup \{\infty\}$.
\end{assumption}

Assumption~\ref{asm:2} is evidently satisfied if, for each $1\leq i\leq N$,~(a)~mapping  $\boldsymbol{\ell}_i:\mathcal{X}_i \times \prod_{(j,i)\in\mathcal{E}^C}\mathcal{X}_j \times \mathcal{U}_i \rightarrow \mathbb{R}_{\geq 0}$ is quadratic, and~(b)~mapping $\mathbf{f}_i:\mathcal{X}_i \times \prod_{(j,i)\in\mathcal{E}^P}\mathcal{X}_j \times \mathcal{U}_i \rightarrow \mathcal{X}_i$ is linear~\cite{Anderson1971}. We can also consider strictly convex mappings $\boldsymbol{\ell}_i:\mathcal{X}_i \times \prod_{(j,i)\in\mathcal{E}^C}\mathcal{X}_j \times \mathcal{U}_i \rightarrow \mathbb{R}_{\geq 0}$ when working with finite-horizon cases~\cite{boyd2004convex}.

\section{Main Results}\label{sec:main}
Formulating a constrained optimization problem as a dual problem, in some cases, enables us to solve it in a decentralized manner across a network of agents. Typically, each iteration for solving the dual problem involves broadcasting and receiving variables for each agent. The variables that need to be communicated between the agents are the variables appearing in the cost function and the variables (Lagrange multipliers) used to enforce the constraints. In the rest of this section, we first cast the MPC problem in a dual decomposition framework and then introduce our result on guaranteeing the performance of the iterations to solve the decomposed problem distributedly.

\subsection{Dual Decomposition}
Let us, for each $1\leq i\leq N$, introduce slack variables $\bar{\mathbf{v}}_i[k]\in\mathbb{R}^{\sum_{(j,i)\in\mathcal{E}^P} n_j}$ and $\bar{\mathbf{w}}_i[k]\in\mathbb{R}^{\sum_{(j,i)\in\mathcal{E}^C} n_j}$. Doing so, we can rewrite the finite-horizon optimal control problem in~(\ref{eqn:MPC2}) as
\begin{equation*}
\begin{split}
(\hat{\mathbf{u}}_i^*[k:k+T])_{i=1}^N= \argmin_{(\hat{\mathbf{u}}_i[k:k+T])_{i=1}^N} \hspace{.1in}& \sum_{t=k}^{k+T}\sum_{i=1}^N \boldsymbol{\ell}_i(\hat{\mathbf{x}}_i[t],\bar{\mathbf{w}}_i[t];\hat{\mathbf{u}}_i[t]),\\
\mbox{subject to } \hspace{.1in}& \hat{\mathbf{x}}_i[t+1]=\mathbf{f}_i(\hat{\mathbf{x}}_i[t],\bar{\mathbf{v}}_i[t];\hat{\mathbf{u}}_i[t]), 1\leq i\leq N, \forall\; t\in\mathbb{Z}_{\geq k}^{\leq k+T}, \\ &\hat{\mathbf{x}}_i[k]=\mathbf{x}_i[k], \; 1\leq i\leq N, \\ & \hat{\mathbf{x}}_i[t]\in \mathcal{X}_i, \; 1\leq i\leq N, \forall\; t\in\mathbb{Z}_{\geq k}^{\leq k+T}, \\ & \hat{\mathbf{u}}_i[t]\in \mathcal{U}_i, \; 1\leq i\leq N, \forall\; t\in\mathbb{Z}_{\geq k}^{\leq k+T}, \\ & \bar{\mathbf{w}}_i[t]=\hat{\mathbf{w}}_i[t], 1\leq i\leq N, \forall\; t\in\mathbb{Z}_{\geq k}^{\leq k+T}, \\ & \bar{\mathbf{v}}_i[t]=\hat{\mathbf{v}}_i[t], 1\leq i\leq N, \forall\; t\in\mathbb{Z}_{\geq k}^{\leq k+T}.
\end{split}
\end{equation*}
We can incorporate the set of constraints $\bar{\mathbf{v}}_i[t]=\hat{\mathbf{v}}_i[t]$ and $\bar{\mathbf{w}}_i[t]=\hat{\mathbf{w}}_i[t]$ into the cost function as
\begin{equation} \label{eqn:Lagrange_cost}
\begin{split}
\max_{(\boldsymbol{\lambda}_i,\boldsymbol{\mu}_i)_{i=1}^N} \min_{ (\hat{\mathbf{u}}_i,\bar{\mathbf{v}}_i,\bar{\mathbf{w}}_i)_{i=1}^N } \sum_{t=k}^{k+T}\sum_{i=1}^N \bigg[ &\boldsymbol{\ell}_i(\hat{\mathbf{x}}_i[t],\bar{\mathbf{w}}_i[t];\hat{\mathbf{u}}_i[t]) \\&+\boldsymbol{\lambda}_i[t]^\top(\bar{\mathbf{v}}_i[t]-\hat{\mathbf{v}}_i[t])
+\boldsymbol{\mu}_i[t]^\top(\bar{\mathbf{w}}_i[t]-\hat{\mathbf{w}}_i[t])\bigg],
\end{split}
\end{equation}
where, for each $1\leq i\leq N$, variables $(\boldsymbol{\lambda}_i[k:k+T],\boldsymbol{\mu}_i[k:k+T])_{i=1}^N$ denote the Lagrange multipliers $\boldsymbol{\lambda}_i[t]=(\boldsymbol{\lambda}_{i,j}[t])_{(j,i) \in\mathcal{E}^P} \in\mathbb{R}^{\sum_{(j,i)\in\mathcal{E}^P} n_j}$
and $\boldsymbol{\mu}_i[t]=(\boldsymbol{\mu}_{i,j}[t])_{(j,i)\in\mathcal{E}^C} \in\mathbb{R}^{\sum_{(j,i)\in\mathcal{E}^C} n_j}$, for all $k\leq t\leq k+T$. Note that in~(\ref{eqn:Lagrange_cost}) we dropped the time index of the variables in the subscripts of the minimization and maximization operators to simplify the presentation. We can rearrange the cost function in~(\ref{eqn:Lagrange_cost}) as
\begin{equation} \label{eqn:Lagrange_cost:sperated}
\begin{split}
\max_{(\boldsymbol{\lambda}_i,\boldsymbol{\mu}_i)_{i=1}^N} \sum_{i=1}^{N} \min_{ \hat{\mathbf{u}}_i,\bar{\mathbf{v}}_i,\bar{\mathbf{w}}_i } \sum_{t=k}^{k+T}  \bigg[ \boldsymbol{\ell}_i(\hat{\mathbf{x}}_i[t],\bar{\mathbf{w}}_i[t]&;\hat{\mathbf{u}}_i[t])+ \boldsymbol{\lambda}_i[t]^\top \bar{\mathbf{v}}_i[t] +\boldsymbol{\mu}_i[t]^\top \bar{\mathbf{w}}_i[t]\\ &-\sum_{(i,j)\in \mathcal{E}^P} \boldsymbol{\lambda}_{j,i}[t]^\top \hat{\mathbf{x}}_i[t] -\sum_{(i,j)\in \mathcal{E}^C} \boldsymbol{\mu}_{j,i}[t]^\top \hat{\mathbf{x}}_i[t]\bigg].
\end{split}
\end{equation}
Using~(\ref{eqn:Lagrange_cost:sperated}), we can separate subsystem cost functions, which allows us to develop a distributed scheme for solving the finite-horizon MPC problem in~(\ref{eqn:MPC2}). This distributed scheme is presented in Procedure~\ref{alg:1}.

\begin{algorithm}[t!]
\caption{Distributed algorithm for solving the finite-horizon MPC problem~(\ref{eqn:MPC2})}
\begin{footnotesize}
\label{alg:1}
\begin{algorithmic}
\REQUIRE $\mathbf{x}_i[k]$, $1\leq i\leq N$
\ENSURE $\mathbf{u}_i[k]$, $1\leq i\leq N$
\STATE \hspace{-.115in}\textbf{Parameters:} Iteration numbers $\{S_k\}_{k=0}^\infty$ and gradient ascent step sizes $\{h_i^{(s)},g_i^{(s)}\}_{i,s=0}^\infty$
\FOR{$k=1,2,\dots$}
\STATE - Initialize Lagrange multipliers $(\boldsymbol{\lambda}_i^{(0)}[k:k+T],\boldsymbol{\mu}_i^{(0)}[k:k+T])_{i=1}^N$.
\FOR{$s=1,2,\dots,S_k$}
\FOR{$i=1,2,\dots,N$}
\STATE - Solve the optimization problem
\begin{equation*}
\begin{split}
(\hat{\mathbf{u}}_i^{(s)}[k:k+T],\bar{\mathbf{v}}_i\h^{(s)}[k:k+T], \bar{\mathbf{w}}_i&\h^{(s)}[k:k+T])\\= \argmin_{\hat{\mathbf{u}}_i,\bar{\mathbf{v}}_i,\bar{\mathbf{w}}_i}
\hspace{.1in} & L_i(\hat{\mathbf{u}}_i[k:k+T], \bar{\mathbf{v}}_i[k:k+T], \bar{\mathbf{w}}_i[k:k+T])\\
\mbox{subject to} \hspace{.1in} & \hat{\mathbf{x}}_i[t+1]=\mathbf{f}_i(\hat{\mathbf{x}}_i[t],\bar{\mathbf{v}}_i[t];\hat{\mathbf{u}}_i[t]),  \forall\; t\in\mathbb{Z}_{\geq k}^{\leq k+T}, \\ &\hat{\mathbf{x}}_i[k]=\mathbf{x}_i[k],  \\ & \hat{\mathbf{x}}_i[t]\in \mathcal{X}_i, \; \forall\; t\in\mathbb{Z}_{\geq k}^{\leq k+T}, \\ & \hat{\mathbf{u}}_i[t]\in \mathcal{U}_i, \;  \forall\; t\in\mathbb{Z}_{\geq k}^{\leq k+T}, \\ & \bar{\mathbf{v}}_i[t]\in \prod_{(j,i)\in\mathcal{E}^P}\mathcal{X}_j, \;  \forall\; t\in\mathbb{Z}_{\geq k}^{\leq k+T}, \\ &  \bar{\mathbf{w}}_i[t]\in\prod_{(j,i)\in\mathcal{E}^C}\mathcal{X}_j, \;  \forall\; t\in\mathbb{Z}_{\geq k}^{\leq k+T},
\end{split}
\end{equation*}
where
\begin{equation*}
\begin{split}
L_i(\hat{\mathbf{u}}_i[k:k+T], \bar{\mathbf{v}}_i[k:k+T], \bar{\mathbf{w}}_i[k:k+T])=\sum_{t=k}^{k+T} \bigg[& \boldsymbol{\ell}_i(\hat{\mathbf{x}}_i[t],\bar{\mathbf{w}}_i[t];\hat{\mathbf{u}}_i[t])+ \boldsymbol{\lambda}_i^{(s)}[t]^\top \bar{\mathbf{v}}_i[t] \\+\boldsymbol{\mu}_i^{(s)}[t]^\top \bar{\mathbf{w}}_i[t] &-\sum_{(i,j)\in \mathcal{E}^P} \boldsymbol{\lambda}_{j,i}^{(s)}[t]^\top \hat{\mathbf{x}}_i[t] -\sum_{(i,j)\in \mathcal{E}^C} \boldsymbol{\mu}_{j,i}^{(s)}[t]^\top \hat{\mathbf{x}}_i[t]\bigg].
\end{split}
\end{equation*}
\ENDFOR
\STATE - $\boldsymbol{\lambda}_i^{(s+1)}[t]=\boldsymbol{\lambda}_i^{(s)}[t]+h_i^{(s)} (\bar{\mathbf{v}}_i^{(s)}[t]-\hat{\mathbf{v}}_i^{(s)}[t]),\;1\leq i\leq N,\forall\; t\in\mathbb{Z}_{\geq k}^{\leq k+T}$.
\STATE - $\boldsymbol{\mu}_i^{(s+1)}[t]=\boldsymbol{\mu}_i^{(s)}[t]+g_i^{(s)} (\bar{\mathbf{w}}_i^{(s)}[t]-\hat{\mathbf{w}}_i^{(s)}[t]),\;1\leq i\leq N,\forall \; t\in\mathbb{Z}_{\geq k}^{\leq k+T}$.
\ENDFOR
\STATE - $\mathbf{u}_i[k]=\hat{\mathbf{u}}_i^{(S_k)}[k],\;1\leq i\leq N$.
\ENDFOR
\end{algorithmic}
\end{footnotesize}
\end{algorithm}

\subsection{From Infinite to Finite Horizon}
Let us introduce the notations
$$
V_k((\mathbf{x}_i[k])_{i=1}^N)=\min_{(\hat{\mathbf{u}}_i[k:+\infty])_{i=1}^N} J_k\left((\mathbf{x}_i[k])_{i=1}^N;(\hat{\mathbf{u}}_i[k:+\infty])_{i=1}^N\right),
$$
and
$$
V_k^{(T)}((\mathbf{x}_i[k])_{i=1}^N)=\min_{(\hat{\mathbf{u}}_i[k:k+T])_{i=1}^N} J_k^{(T)}\left((\mathbf{x}_i[k])_{i=1}^N;(\hat{\mathbf{u}}_i[k:k+T])_{i=1}^N\right),
$$
subject to the constraints introduced the infinite-horizon optimal control problem in~(\ref{eqn:MPC1}) and the finite-horizon optimal control problem in~(\ref{eqn:MPC2}), respectively.

\begin{theorem} \label{tho:1} Assume that there exist an a priori given constant $\alpha\in[0,1]$ and controllers $\boldsymbol{\phi}_i:\prod_{i=1}^N\mathcal{X}_i\rightarrow \mathcal{U}_i$, such that, for all $(\mathbf{x}_i[k])_{i=1}^N\in \prod_{i=1}^N\mathcal{X}_i$, we have
\begin{equation} \label{eqn:infinite_to_finite}
\begin{split}
V_k^{(T)}((\mathbf{x}_i[k])_{i=1}^N)\geq  V_{k+1}^{(T)}((\mathbf{x}_i[k+1])_{i=1}^N)+\alpha \sum_{i=1}^N \boldsymbol{\ell}_i(\mathbf{x}_i[k],\mathbf{w}_i[k];\boldsymbol{\phi}_i((\mathbf{x}_i[k])_{i=1}^N)),
\end{split}
\end{equation}
where $\mathbf{x}_i[k+1]=\mathbf{f}_i(\mathbf{x}_i[k],\mathbf{v}_i[k];\boldsymbol{\phi}_i((\mathbf{x}_i[k])_{i=1}^N))$, for each $1\leq i\leq N$. Then
$$
\alpha V_k((\mathbf{x}_i[k])_{i=1}^N) \leq V_k^{(T)}((\mathbf{x}_i[k])_{i=1}^N),
$$
for all $(\mathbf{x}_i[k])_{i=1}^N\in \prod_{i=1}^N\mathcal{X}_i$.
\end{theorem}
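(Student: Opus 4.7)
The plan is to use a relaxed dynamic programming (Lincoln--Rantzer style) argument: iterate the descent inequality~(\ref{eqn:infinite_to_finite}) along the trajectory generated by the controllers $\boldsymbol{\phi}_i$, and take a limit.

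First, fix the feedback policy $\boldsymbol{\phi}=(\boldsymbol{\phi}_i)_{i=1}^N$ and let $(\mathbf{x}_i[t])_{i=1}^N$ for $t\geq k$ denote the state trajectory it produces starting from the given $(\mathbf{x}_i[k])_{i=1}^N$ via $\mathbf{x}_i[t+1]=\mathbf{f}_i(\mathbf{x}_i[t],\mathbf{v}_i[t];\boldsymbol{\phi}_i((\mathbf{x}_i[t])_{i=1}^N))$. I would apply the hypothesis~(\ref{eqn:infinite_to_finite}) at each time $t=k,k+1,\ldots,k+n$ and telescope: the intermediate $V_t^{(T)}$ terms cancel, leaving
$$
V_k^{(T)}((\mathbf{x}_i[k])_{i=1}^N)
\;\geq\; V_{k+n+1}^{(T)}((\mathbf{x}_i[k+n+1])_{i=1}^N)
\;+\;\alpha\sum_{t=k}^{k+n}\sum_{i=1}^N \boldsymbol{\ell}_i(\mathbf{x}_i[t],\mathbf{w}_i[t];\boldsymbol{\phi}_i((\mathbf{x}_i[t])_{i=1}^N)).
$$

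Next, since $\boldsymbol{\ell}_i\geq 0$ by Assumption~\ref{asm:1}, the finite-horizon value function $V_{k+n+1}^{(T)}$ is itself non-negative (as an infimum of a non-negative sum), so I can drop that term on the right to obtain a uniform-in-$n$ lower bound on $V_k^{(T)}((\mathbf{x}_i[k])_{i=1}^N)$. Letting $n\to\infty$, the monotone convergence of the partial sums (again, all summands are non-negative) yields
$$
V_k^{(T)}((\mathbf{x}_i[k])_{i=1}^N)
\;\geq\;\alpha\sum_{t=k}^{\infty}\sum_{i=1}^N \boldsymbol{\ell}_i(\mathbf{x}_i[t],\mathbf{w}_i[t];\boldsymbol{\phi}_i((\mathbf{x}_i[t])_{i=1}^N))
\;=\;\alpha\, J_k\!\left((\mathbf{x}_i[k])_{i=1}^N;(\boldsymbol{\phi}_i(\cdot))_{i=1}^N\right).
$$

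Finally, the control sequence generated by $\boldsymbol{\phi}$ is a feasible choice for the infinite-horizon problem~(\ref{eqn:MPC1}) provided the state and input constraints are respected along the trajectory (which is implicit in the statement, since $\boldsymbol{\phi}_i$ is assumed to map into $\mathcal{U}_i$ and the hypothesis is posed for all admissible $(\mathbf{x}_i[k])_{i=1}^N\in\prod_i\mathcal{X}_i$). Hence $J_k(\cdot;\boldsymbol{\phi}) \geq V_k$ by definition of the minimum, and we conclude $V_k^{(T)} \geq \alpha V_k$.

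The main obstacle I anticipate is the passage to the limit and the implicit feasibility of the $\boldsymbol{\phi}$-generated trajectory: one needs the telescoping argument to remain valid indefinitely, i.e.\ that the inequality~(\ref{eqn:infinite_to_finite}) can be re-applied at every subsequent state $(\mathbf{x}_i[t])_{i=1}^N$. This requires the closed-loop trajectory to stay in $\prod_i\mathcal{X}_i$, which is a standard positive-invariance/recursive-feasibility type assumption that is built into the hypothesis ``for all $(\mathbf{x}_i[k])_{i=1}^N\in\prod_i\mathcal{X}_i$''; once that is accepted, the remaining non-negativity and monotone-convergence steps are routine.
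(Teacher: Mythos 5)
Your proof is correct and is, in substance, the same argument the paper relies on: the paper simply cites Proposition~2.2 of Gr\"une and Rantzer (with $\tilde{V}=V_k^{(T)}$), and that proposition is proved by exactly the telescoping relaxed-dynamic-programming argument you give, including dropping the non-negative terminal $V^{(T)}$ term, passing to the limit by monotonicity, and comparing the $\boldsymbol{\phi}$-generated (feasible, since $\mathbf{f}_i$ maps into $\mathcal{X}_i$ and $\boldsymbol{\phi}_i$ into $\mathcal{U}_i$) sequence against the infinite-horizon minimum. Your closing remark on recursive feasibility correctly identifies the only point that needs checking, and it is handled by the paper's standing assumptions.
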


\begin{proof} The proof is a direct consequence of Proposition~2.2 in~\cite{Grune4639448}, when $\tilde{V}(\cdot)$ in~\cite{Grune4639448} is chosen equal to $V_k^{(T)}(\cdot)$ above.
\end{proof}

This theorem illustrates that by solving the finite-horizon optimal control problem in~(\ref{eqn:MPC2}), we get a sub-optimal solution, which is in a vicinity of the solution of the infinite-horizon optimal control problem in~(\ref{eqn:MPC1}) if $\alpha$ is chosen close to one. Hence, in the paper, we assume that the horizon $T$ is chosen such that it satisfies~(\ref{eqn:infinite_to_finite}). In that way, we do not lose much by abandoning the infinite-horizon optimal control problem for the finite-horizon one.

\subsection{Convergence} \label{subsec:dualdecomp}
Generically, in solving any optimization problem, if one resorts to use dual decomposition methods the convergence behaviors of dual iterations does not necessarily coincides to that of the primal formulation. In other words, if one terminates the dual iterations after $S_k$ steps and obtains the decision variables, one cannot make a statement on how close is the primal cost function evaluated at the obtained variable to its optimal value, i.e., the optimality gap cannot be determined. However, for model predictive control one can find a bound on such a distance. We aim to propose a way to calculate the optimality gap for general distributed MPC problems based on the results proposed by~\cite{giselsson2010distributed}.

Let us introduce the notation
\begin{equation*}
\begin{split}
V_k^{(T),(s)}((\mathbf{x}_i[k])_{i=1}^N)=\sum_{t=k}^{k+T}\sum_{i=1}^N \bigg[&\boldsymbol{\ell}_i(\hat{\mathbf{x}}_i[t],\bar{\mathbf{w}}_i\h^{(s)}[t]; \hat{\mathbf{u}}_i^{(s)}[t])\\&+\boldsymbol{\lambda}_i^{(s)}[t]^\top(\bar{\mathbf{v}}_i^{(s)}[t]- \hat{\mathbf{v}}_i[t]) +\boldsymbol{\mu}_i^{(s)}[t]^\top(\bar{\mathbf{w}}_i^{(s)}[t]-\hat{\mathbf{w}}_i[t])\bigg],
\end{split}
\end{equation*}
where $(\hat{\mathbf{u}}_i^{(s)}[k:k+T],\bar{\mathbf{v}}_i\h^{(s)}[k:k+T], \bar{\mathbf{w}}_i\h^{(s)}[k:k+T])_{i=1}^N$ is extracted from Procedure~1.

\begin{theorem} \label{tho:2} Let $\{\tilde{V}_k\}_{k=0}^\infty$ be a given family of mappings, such that, for each $k\in\mathbb{Z}_{\geq 0}$, $\tilde{V}_k:\prod_{i=1}^N\mathcal{X}_i\rightarrow \mathbb{R}_{\geq 0}$ satisfies
\begin{equation} \label{eqn:0:tho:2}
\tilde{V}_k((\mathbf{x}_i[k])_{i=1}^N) \geq V_k^{(T)}((\mathbf{x}_i[k])_{i=1}^N),
\end{equation}
for all $(\mathbf{x}_i[k])_{i=1}^N\in \prod_{i=1}^N\mathcal{X}_i$. In addition, let iteration number $S_k$ in Procedure~\ref{alg:1}, in each time-step $k\in\mathbb{Z}_{\geq 0}$, be given such that
\begin{equation} \label{eqn:1:tho:2}
V_k^{(T),(S_k)}((\mathbf{x}_i[k])_{i=1}^N)-\tilde{V}_{k+1}((\mathbf{x}_i[k+1])_{i=1}^N) \geq e[k]+\alpha \sum_{i=1}^N \boldsymbol{\ell}_i(\mathbf{x}_i[k],\mathbf{w}_i[k]; \hat{\mathbf{u}}_i^{(S_k)}[k])
\end{equation}
for a given constant $\alpha\in[0,1]$, where $\mathbf{x}_i[k+1]=\mathbf{f}_i(\mathbf{x}_i[k],\mathbf{v}_i[k];\hat{\mathbf{u}}_i^{(S_k)}[k])$, for each $1\leq i\leq N$. The sequence $\{e[k]\}_{k=0}^\infty$ is described by the difference equation
\begin{equation*}
\begin{split}
e[k]=e[k-1]+ \alpha \sum_{i=1}^N \boldsymbol{\ell}_i(\mathbf{x}_i[k-1]&,\mathbf{w}_i[k-1];\mathbf{u}_i[k-1])\\ &+\tilde{V}_k((\mathbf{x}_i[k])_{i=1}^N)
-\tilde{V}_{k-1}((\mathbf{x}_i[k-1])_{i=1}^N),
\end{split}
\end{equation*}
for all $k\in\mathbb{Z}_{\geq 2}$ and
\begin{equation*}
\begin{split}
e[1]=\alpha  \sum_{i=1}^N \boldsymbol{\ell}_i(\mathbf{x}_i[0],\mathbf{w}_i[0];\mathbf{u}_i[0])+\tilde{V}_1((\mathbf{x}_i[1])_{i=1}^N)
-V_0^{(T),(S_0)}((\mathbf{x}_i[0])_{i=1}^N),
\end{split}
\end{equation*}
and $e[0]=0$. Then
\begin{equation} \label{eqn:costguarantee}
\alpha J_0 \left((\mathbf{x}_i[0])_{i=1}^N;(\mathbf{u}_i[0:+\infty])_{i=1}^N\right) \leq V_0((\mathbf{x}_i[0])_{i=1}^N).
\end{equation}
for any initial condition $(\mathbf{x}_i[0])_{i=1}^N\in\prod_{i=1}^N\mathcal{X}_i$. In addition, if $V_0((\mathbf{x}_i[0])_{i=1}^N)<\infty$ for any initial condition $(\mathbf{x}_i[0])_{i=1}^N\in\prod_{i=1}^N\mathcal{X}_i$, then
$$
\lim_{k\rightarrow \infty} \mathbf{x}_i[k]=0,\; 1\leq i\leq N.
$$
\end{theorem}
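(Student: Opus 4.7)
The plan is to solve the recursion for $\{e[k]\}$ in closed form, substitute the resulting expression into the hypothesis~(\ref{eqn:1:tho:2}) to expose a telescoping structure, and then invoke weak duality together with the non-negativity of the mappings $\tilde{V}_k$ and $\boldsymbol{\ell}_i$ to bound $J_0$ by $V_0$.

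First I would unroll the defining difference equation for $e[k]$. Writing $a_j := \alpha\sum_{i=1}^N \boldsymbol{\ell}_i(\mathbf{x}_i[j],\mathbf{w}_i[j];\mathbf{u}_i[j])$ and using $\mathbf{u}_i[j]=\hat{\mathbf{u}}_i^{(S_j)}[j]$, a straightforward induction starting from $e[1]=a_0+\tilde{V}_1-V_0^{(T),(S_0)}$ yields
\begin{equation*}
e[k]=\sum_{j=0}^{k-1} a_j + \tilde{V}_k - V_0^{(T),(S_0)}, \qquad k\in\mathbb{Z}_{\geq 1},
\end{equation*}
since the intermediate $\tilde{V}_j$'s telescope. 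Feeding this into~(\ref{eqn:1:tho:2}) at time $k$ and adding the single extra term $a_k$ gives
\begin{equation*}
V_k^{(T),(S_k)} \geq \tilde{V}_{k+1} + \tilde{V}_k - V_0^{(T),(S_0)} + \sum_{j=0}^{k} a_j.
\end{equation*}

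Next I would use weak duality to kill the term $\tilde{V}_k$. Because the primal iterates $(\hat{\mathbf{u}}_i^{(S_k)},\bar{\mathbf{v}}_i^{(S_k)},\bar{\mathbf{w}}_i^{(S_k)})$ in Procedure~\ref{alg:1} are, by construction, minimizers of the separable Lagrangian at the current multipliers, $V_k^{(T),(S_k)}$ equals the dual-function value at $(\boldsymbol{\lambda}^{(S_k)},\boldsymbol{\mu}^{(S_k)})$, so weak duality gives $V_k^{(T),(S_k)}\leq V_k^{(T)}$; combined with hypothesis~(\ref{eqn:0:tho:2}) this yields $\tilde{V}_k\geq V_k^{(T),(S_k)}$. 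Rearranging the displayed inequality and using $\tilde{V}_{k+1}\geq 0$ therefore produces
\begin{equation*}
V_0^{(T),(S_0)} \;\geq\; \tilde{V}_{k+1} + \sum_{j=0}^{k} a_j \;\geq\; \sum_{j=0}^{k} a_j,
\end{equation*}
uniformly in $k$. Letting $k\to\infty$ recovers $V_0^{(T),(S_0)}\geq \alpha J_0$. Finally, dropping the terms indexed by $t>T$ in $J_0$ and using any infinite-horizon optimizer as a feasible candidate for the finite-horizon problem shows $V_0\geq V_0^{(T)}\geq V_0^{(T),(S_0)}$, which chains with the previous inequality to give~(\ref{eqn:costguarantee}).

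For the asymptotic claim, finiteness of $V_0$ and~(\ref{eqn:costguarantee}) force $\sum_{k=0}^\infty \sum_{i=1}^N \boldsymbol{\ell}_i(\mathbf{x}_i[k],\mathbf{w}_i[k];\mathbf{u}_i[k])<\infty$, so each summand tends to zero. Because every $\boldsymbol{\ell}_i$ is continuous in its first argument and vanishes there only at $\mathbf{x}_i=0$ (Assumption~\ref{asm:1}), standard arguments then pin $\mathbf{x}_i[k]\to 0$ along every convergent subsequence, hence along the whole trajectory given the state constraints $\mathcal{X}_i$. The main obstacle I expect is the bookkeeping in the first paragraph: one must be careful to verify that the closed form for $e[k]$ is compatible with the special expression given for $e[1]$ (i.e. that the telescoping does not spuriously produce a $\tilde{V}_0$ term), and that the dual-value inequality $V_k^{(T),(S_k)}\leq V_k^{(T)}$ is correctly justified from the construction of Procedure~\ref{alg:1} rather than assumed.
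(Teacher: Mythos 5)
Your proposal is correct and follows essentially the same route as the paper's own proof: it unrolls the recursion for $e[k]$ into the closed form $e[k]=\alpha\sum_{t=0}^{k-1}\sum_i\boldsymbol{\ell}_i+\tilde{V}_k-V_0^{(T),(S_0)}$, substitutes into~(\ref{eqn:1:tho:2}), uses weak duality together with~(\ref{eqn:0:tho:2}) to cancel $\tilde{V}_k$ and telescope down to $\alpha\sum_{t=0}^{k}\sum_i\boldsymbol{\ell}_i\leq V_0^{(T),(S_0)}\leq V_0^{(T)}\leq V_0$, and then deduces convergence of the states from summability of the stage costs via Assumption~\ref{asm:1}. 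The only cosmetic difference is that you argue the asymptotic claim directly from term-wise convergence of a summable nonnegative series, whereas the paper phrases the same idea as a contradiction with a subsequence bounded away from zero.
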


\begin{proof} The proof of this theorem is a generalization of the proof of Theorem~3 in~\cite{giselsson2010distributed}. First, by induction, we can prove that
\begin{equation} \label{eqn:proof:1}
e[k]=\alpha \sum_{t=0}^{k-1}\sum_{i=1}^N \boldsymbol{\ell}_i(\mathbf{x}_i[t],\mathbf{w}_i[t];\mathbf{u}_i[t]) + \tilde{V}_k((\mathbf{x}_i[k])_{i=1}^N)
-V_0^{(T),(S_0)}((\mathbf{x}_i[0])_{i=1}^N).
\end{equation}
Substituting~(\ref{eqn:proof:1}) inside~(\ref{eqn:1:tho:2}), we get
\begin{equation} \label{eqn:proof:2}
\begin{split}
\alpha \sum_{t=0}^{k}\sum_{i=1}^N \boldsymbol{\ell}_i(\mathbf{x}_i[t],\mathbf{w}_i[t];\mathbf{u}_i[t]) &=e[k]+\alpha \sum_{i=1}^N \boldsymbol{\ell}_i(\mathbf{x}_i[k],\mathbf{w}_i[k];\hat{\mathbf{u}}_i^{(S_k)}[k])\\ &\hspace{.4in}-\tilde{V}_k((\mathbf{x}_i[k])_{i=1}^N)
+V_0^{(T),(S_0)}((\mathbf{x}_i[0])_{i=1}^N) \\ & \leq V_k^{(T),(S_k)}((\mathbf{x}_i[k])_{i=1}^N)-\tilde{V}_{k+1}((\mathbf{x}_i[k+1])_{i=1}^N)
\\ &\hspace{.4in}-\tilde{V}_k((\mathbf{x}_i[k])_{i=1}^N)
+V_0^{(T),(S_0)}((\mathbf{x}_i[0])_{i=1}^N).
\end{split}
\end{equation}
Considering condition~(\ref{eqn:0:tho:2}), we have
$$
\tilde{V}((\mathbf{x}_i[k])_{i=1}^N) \geq V_k^{(T)}((\mathbf{x}_i[k])_{i=1}^N) \geq V^{(T),(S_k)}((\mathbf{x}_i[k])_{i=1}^N),
$$
where the right-most inequality is a direct consequence of standard duality properties. Therefore, we can simplify~(\ref{eqn:proof:2}) into
\begin{equation*}
\begin{split}
\alpha \sum_{t=0}^{k}\sum_{i=1}^N \boldsymbol{\ell}_i(\mathbf{x}_i[t],\mathbf{w}_i[t];\mathbf{u}_i[t]) &\leq V_0^{(T),(S_0)}((\mathbf{x}_i[0])_{i=1}^N) -\tilde{V}_{k+1}((\mathbf{x}_i[k+1])_{i=1}^N) \\ & \leq V_0^{(T),(S_0)}((\mathbf{x}_i[0])_{i=1}^N) \\ & \leq V_0^{(T)}((\mathbf{x}_i[0])_{i=1}^N) \\ &\leq V_0((\mathbf{x}_i[0])_{i=1}^N),
\end{split}
\end{equation*}
where the last inequality is consequence of the observation that longer planning horizons results in larger cost functions (due to the lack of terminal cost or constraints)~\cite{giselsson2010distributed}. Hence, we get
\begin{equation*}
\begin{split}
\alpha J_0 \left((\mathbf{x}_i[0])_{i=1}^N;(\mathbf{u}_i[0:+\infty])_{i=1}^N\right) &= \lim_{k\rightarrow \infty} \alpha \sum_{t=0}^{k}\sum_{i=1}^N \boldsymbol{\ell}_i(\mathbf{x}_i[t],\mathbf{w}_i[t];\mathbf{u}_i[t]) \\ &\leq V_0((\mathbf{x}_i[0])_{i=1}^N).
\end{split}
\end{equation*}
The proof of the second part is done by a contradiction. Assume that there exists an index~$j$ such that $\mathbf{x}_j[k]$ does not converge to the origin as $k$ goes to infinity. Because of Assumption~\ref{asm:1}, it is easy to see that there exists a strictly increasing sequence of time-steps $\{k_\tau\}_{\tau=0}^\infty$ such that $\boldsymbol{\ell}_j(\mathbf{x}_j[k_\tau],\mathbf{w}_j[k_\tau]; \mathbf{u}_j[k_\tau])\geq \epsilon>0$ for all $\tau\in\mathbb{Z}_{\geq 0}$. Note that we can always construct such a sequence by enforcing the condition $\|\mathbf{x}_j[k_\tau]\|\geq \varepsilon'>0$ for all $\tau\in\mathbb{Z}_{\geq 0}$ (because otherwise, $\lim_{k\rightarrow \infty} \mathbf{x}_j[k]= 0$). Hence, we get
\begin{equation*}
\begin{split}
\alpha J_0 \left((\mathbf{x}_i[0])_{i=1}^N;(\mathbf{u}_i[0:+\infty])_{i=1}^N\right) &= \lim_{k\rightarrow \infty} \alpha \sum_{t=0}^{k}\sum_{i=1}^N \boldsymbol{\ell}_i(\mathbf{x}_i[t],\mathbf{w}_i[t];\mathbf{u}_i[t]) \\
&\geq \lim_{M\rightarrow \infty} \alpha \sum_{\tau=1}^{M} \boldsymbol{\ell}_j(\mathbf{x}_j[k_\tau],\mathbf{w}_j[k_\tau];\mathbf{u}_j[k_\tau])
\\ &\geq \lim_{M\rightarrow \infty} \sum_{\tau=0}^{M} \alpha \epsilon \\ &= \lim_{M\rightarrow \infty} \alpha \epsilon (M+1)=+\infty.
\end{split}
\end{equation*}
This contradicts the boundedness of $V_0((\mathbf{x}_i[0])_{i=1}^N)$.
\hfill$\square$
\end{proof}

Theorem~\ref{tho:2} shows that, provided that $\{S_k\}_{k=0}^\infty$ guarantees~(\ref{eqn:1:tho:2}), the cost of the sub-optimal controller extracted from Procedure~\ref{alg:1} is in a close vicinity of the global optimal controller, i.e., the cost of the sub-optimal controller is never worse than $1/\alpha$ times the cost of the global optimal controller. In addition, the closed-loop system is stable.

Now, we only need to present a mapping $\tilde{V}_k(\cdot)$ that satisfies condition~(\ref{eqn:0:tho:2}). We use the method presented in~\cite{giselsson2010distributed} for generating a reasonable upper bound. Let us introduce the one-step forward shift operator $\textbf{q}_T: (\prod_{i=1}^N\mathcal{U}_i)^{T+1} \rightarrow (\prod_{i=1}^N\mathcal{U}_i)^{T+1}$, so that for any control sequence $(\mathbf{u}_i[0:T])_{i=1}^N\in(\prod_{i=1}^N\mathcal{U}_i)^{T+1}$ we have
$$
\textbf{q}_T((\mathbf{u}_i[0:T])_{i=1}^N)=(\mathbf{u}'_i[0:T])_{i=1}^N,
$$
where
$$
\mathbf{u}'_i[t]=\left\{\begin{array}{ll} \mathbf{u}_i[t+1], & 0\leq t\leq T-1, \\ 0, & t=T,\end{array} \right.
$$
for all $1\leq i\leq N$. Now, for any time-step $k\in\mathbb{Z}_{\geq 1}$, we can define
$$
\tilde{V}_k((\mathbf{x}_i[k])_{i=1}^N)=J_k\left((\mathbf{x}_i[k])_{i=1}^N; \textbf{q}_T(\hat{\mathbf{u}}_i^{(S_{k-1})}[k-1:T+k-1])\right),
$$
where the control sequence $\hat{\mathbf{u}}_i^{(S_{k-1})}[k-1:T+k-1]$ denotes the control actions of step $k-1$ extracted from Procedure~\ref{alg:1}. For this described function, it is easy to check that
\begin{equation*}
\begin{split}
\tilde{V}_k((\mathbf{x}_i[k])_{i=1}^N)&=J_k^{(T)}\left((\mathbf{x}_i[k])_{i=1}^N; \textbf{q}_T(\hat{\mathbf{u}}_i^{(S_{k-1})}[k-1:T+k-1])\right)
\\& \geq V_k^{(T)}((\mathbf{x}_i[k])_{i=1}^N),
\end{split}
\end{equation*}
because the control sequence $\textbf{q}_T(\{\hat{\mathbf{u}}_i^{(S_{k-1})}[t]\}_{t=k-1}^{T+k-1})$ might not be optimal for time-step~$k$. Hence, we have proposed a suitable mapping for Theorem~\ref{tho:2}.

\subsection{ADMM Formulation}
A way to achieve better numerical properties for solving distributed MPC is to apply ADMM, which retains the decomposability of the dual formulation while ensuring better convergence properties in terms of speed and stability~\cite{Boyd11}. Recently, solving MPC via ADMM has gained some attention~\cite{bo2012}. In what comes next, we cast the problem introduced in this chapter in an ADMM framework and give a sub-optimality guarantee for this scenario.

We rewrite the MPC problem proposed in Section \ref{subsec:mpc}:
\begin{equation}\label{eq:min_admm_main}
\begin{split}
(\hat{\mathbf{u}}_i^*[k:k+T])_{i=1}^N= \argmin_{ (\hat{\mathbf{u}}_i[k:k+T])_{i=1}^N} \hspace{.1in}& \sum_{t=k}^{k+T}\sum_{i=1}^N \boldsymbol{\ell}_i(\mathbf{y}_i[t]),\\
\mbox{subject to } \hspace{.1in}& \mathbf{y}_i[t] \in\mathcal{C}_i[t],\;1\leq i\leq N, \;\forall\; t\in\mathbb{Z}_{\geq k}^{\leq k+T}
\end{split}
\end{equation}
where, for each $k\leq t\leq k+T$, $\mathbf{y}_i[t]= [\hat{\mathbf{x}}_i[t]^\top,\bar{\mathbf{w}}_i[t]^\top,\hat{\mathbf{u}}_i[t]^\top, \bar{\mathbf{v}}_i[t]]^\top$, and
\begin{equation}
\begin{split}
\mathcal{C}_i[t]=\bigg\{[\hat{\mathbf{x}}_i[t]^\top,&\bar{\mathbf{w}}_i[t]^\top,\hat{\mathbf{u}}_i[t]^\top, \bar{\mathbf{v}}_i[t]]^\top \;\big|\; \hat{\mathbf{x}}_i[s+1] =\mathbf{f}_i(\hat{\mathbf{x}}_i[s], \bar{\mathbf{v}}_i[s];\hat{\mathbf{u}}_i[s]),\; \hat{\mathbf{x}}_i[s]\in \mathcal{X}_i,\\ &\hat{\mathbf{u}}_i[s]\in \mathcal{U}_i, \;\bar{\mathbf{w}}_i[s]=\hat{\mathbf{w}}_i[s], \; \bar{\mathbf{v}}_i[s]=\hat{\mathbf{v}}_i[s],\;\forall\; s\in\mathbb{Z}_{\geq k}^{\leq t},\; \hat{\mathbf{x}}_i[k]=\mathbf{x}_i[k]\bigg\}.
  \end{split}
\end{equation}
Provided that $\mathcal{C}_i[t]$ is convex for all $t\in\mathbb{Z}_{\geq k}^{\leq k+T}
$, then \eqref{eq:min_admm_main} is equivalent to
\begin{equation*}
\begin{split}
(\hat{\mathbf{u}}_i^*[k:k+T])_{i=1}^N= \argmin_{ (\hat{\mathbf{u}}_i[k:k+T])_{i=1}^N} \hspace{.1in}& \sum_{t=k}^{k+T}\sum_{i=1}^N \big[ \boldsymbol{\ell}_i(\mathbf{y}_i[t])+\boldsymbol{\psi}_i(\boldsymbol{\zeta}_i[t])\big],\\
\mbox{subject to } \hspace{.1in}& \mathbf{y}_i[t] =\boldsymbol{\zeta}_i[t], \;1\leq i\leq N, \;\forall\; t\in\mathbb{Z}_{\geq k}^{\leq k+T}
\end{split}
\end{equation*}
where  $\boldsymbol{\psi}_{\mathcal{C}_i[t]}(\cdot)$ is an indicator for $\mathcal{C}_i[t]$, viz. $\boldsymbol{\psi}_{\mathcal{C}_i[t]}(z)=0$ if $z\in\mathcal{C}_i[t]$ and $\boldsymbol{\psi}_{\mathcal{C}_i[t]}(z)=+\infty$ otherwise. The augmented Lagrangian for this problem is
\begin{equation}
\begin{split}
L((\mathbf{y}_i[k:k+T]&)_{i=1}^N,(\boldsymbol{\zeta}_i[k:k+T])_{i=1}^N,(\boldsymbol{\gamma}_i[k:k+T])_{i=1}^N)\\&=
\sum_{t=k}^{k+T}\sum_{i=1}^N \bigg[ \boldsymbol{\ell}_i(\mathbf{y}_i[t])+\boldsymbol{\psi}_i(\boldsymbol{\zeta}_i[t]) +\dfrac{\rho}{2}\left\|\mathbf{y}_i[t] -\boldsymbol{\zeta}_i[t]-\boldsymbol{\gamma}_i[t]\right\|^2\bigg],
\end{split}
\end{equation}
where $\boldsymbol{\gamma}_i[t]$, $1\leq i\leq N$, are the scaled dual variables.
We outline a distributed procedure that solves the problem in Procedure~\ref{alg:2}.

\begin{algorithm}[t!]
\caption{Distributed algorithm for solving the MPC problem~(\ref{eqn:MPC2}) via ADMM}
\begin{footnotesize}
\label{alg:2}
\begin{algorithmic}
\REQUIRE $\mathbf{x}_i[k]$, $1\leq i\leq N$
\ENSURE $\mathbf{u}_i[k]$, $1\leq i\leq N$
\FOR{$k=1,2,\dots$}
\STATE - Initialize scaled dual variables $(\boldsymbol{\gamma}_i^{(0)}[t])_{i=1}^N$.
\FOR{$s=1,2,\dots,S_k$}
\FOR{$i=1,2,\dots,N$}
\STATE - Solve the optimization problem
\begin{equation*}
\begin{split}
\mathbf{y}_i^{(s)}[k:k+T]= \argmin_{ \mathbf{y}_i[k:k+T]} & \sum_{t=k}^{k+T} \bigg[ \boldsymbol{\ell}_i(\mathbf{y}_i[t])+\dfrac{\rho}{2}\|\mathbf{y}_i[t] -\boldsymbol{\zeta}^{(s)}_i[t]-\boldsymbol{\gamma}^{(s)}_i[t]\|^2\bigg].\end{split}
\end{equation*}
\ENDFOR
\STATE - $\boldsymbol{\zeta}_i^{(s+1)}[t]=\Pi_{\mathcal{C}_i[t]}(\mathbf{y}_i[t]+\boldsymbol{\gamma}_i^{(s)}[t]),\;1\leq i\leq N,\forall\; t\in\mathbb{Z}_{\geq k}^{\leq k+T}$. \COMMENT{$\Pi_{\mathcal{C}_i[t]}(\cdot)$ is a projection onto $\mathcal{C}_i[t]$}
\STATE - $\boldsymbol{\gamma}_i^{(s+1)}[t] := \boldsymbol{\gamma}_i^{(s)}[t] + (\mathbf{y}_i[t] -\boldsymbol{\zeta}_i^{(s+1)}[t]),\;1\leq i\leq N,\forall \; t\in\mathbb{Z}_{\geq k}^{\leq k+T}$.
\ENDFOR
\STATE - $\mathbf{u}_i[k]=\hat{\mathbf{u}}_i^{(S_k)}[k],\;1\leq i\leq N$.
\ENDFOR
\end{algorithmic}
\end{footnotesize}
\end{algorithm}

Now, we reintroduce the following function for solving the MPC problem via ADMM:
\begin{equation*}
\begin{split}
V^{(T),(s)}((\mathbf{x}_i[k])_{i=1}^N)=\sum_{t=k}^{k+T}\sum_{i=1}^N \bigg[\boldsymbol{\ell}_i&(\mathbf{y}_i^{(s)}[t])+\dfrac{\rho}{2}\left\|\mathbf{y}_i^{(s)}[t] -\boldsymbol{\zeta}_i^{(s)}[t]-\boldsymbol{\gamma}_i^{(s)}[t]\right\|^2\bigg].
\end{split}
\end{equation*}

For the case that $\|\mathbf{y}_i^{(s)}[t] -\boldsymbol{\zeta}_i^{(s)}[t]-\boldsymbol{\gamma}_i^{(s)}[t]\|^2$ is less than a given threshold $\varepsilon \ll 1$, one might be able to follow the same line of reasoning as in Theorem~\ref{tho:2}. The next proposition can be seen as a simple example of such results.

\begin{proposition} \label{tho:3} Assume that, in Procedure~\ref{alg:2}, for any time-step $k\in\mathbb{Z}_{\geq 0}$, we have
\begin{equation} \label{eqn:condition:ADMM:convergence}
\lim_{S_k\rightarrow +\infty}V_k^{(T),(S_k)}((\mathbf{x}_i[k])_{i=1}^N)= V_k^{(T)}((\mathbf{x}_i[k])_{i=1}^N).
\end{equation}
Let $\{\tilde{V}_k\}_{k=0}^\infty$ be a given family of mappings $\tilde{V}_k:\prod_{i=1}^N\mathcal{X}_i\rightarrow \mathbb{R}_{\geq 0}$ for $k\in\mathbb{Z}_{\geq 0}$. In addition, let iteration number $S_k$ in Procedure~\ref{alg:2}, in each time-step $k\in\mathbb{Z}_{\geq 0}$, be given such that
\begin{equation} \label{eqn:1:tho:3:1}
\tilde{V}_{k}((\mathbf{x}_i[k])_{i=1}^N) \geq V_k^{(T),(S_k)}((\mathbf{x}_i[k])_{i=1}^N),
\end{equation}
and
\begin{equation} \label{eqn:1:tho:3:2}
V_k^{(T),(S_k)}((\mathbf{x}_i[k])_{i=1}^N)-\tilde{V}_{k+1}((\mathbf{x}_i[k+1])_{i=1}^N) \geq e[k]+\alpha \sum_{i=1}^N \boldsymbol{\ell}_i(\mathbf{x}_i[k],\mathbf{w}_i[k]; \hat{\mathbf{u}}_i^{(S_k)}[k]),
\end{equation}
for a given constant $\alpha\in[0,1]$, where $\mathbf{x}_i[k+1]=\mathbf{f}_i(\mathbf{x}_i[k], \mathbf{v}_i[k];\hat{\mathbf{u}}_i^{(S_k)}[k])$, for each $1\leq i\leq N$. The sequence $\{e[k]\}_{k=0}^\infty$ is described by the difference equation
\begin{equation*}
\begin{split}
e[k]=e[k-1]+ \alpha \sum_{i=1}^N \boldsymbol{\ell}_i(\mathbf{x}_i[k-1]&,\mathbf{w}_i[k-1];\mathbf{u}_i[k-1])\\ &+\tilde{V}_k((\mathbf{x}_i[k])_{i=1}^N)
-\tilde{V}_{k-1}((\mathbf{x}_i[k-1])_{i=1}^N),
\end{split}
\end{equation*}
for all $k\in\mathbb{Z}_{\geq 2}$ and
\begin{equation*}
\begin{split}
e[1]=\alpha \sum_{i=1}^N \boldsymbol{\ell}_i(\mathbf{x}_i[0],\mathbf{w}_i[0];\mathbf{u}_i[0])+\tilde{V}_1((\mathbf{x}_i[1])_{i=1}^N)
-V_0^{(T),(S_0)}((\mathbf{x}_i[0])_{i=1}^N),
\end{split}
\end{equation*}
and $e[0]=0$. Then, for any initial condition $(\mathbf{x}_i[0])_{i=1}^N\in\prod_{i=1}^N\mathcal{X}_i$ and any given constant $\varepsilon>0$, there exists $S_0\in\mathbb{Z}_{\geq 1}$ such that
\begin{equation} \label{eqn:costguarantee3}
\alpha J_0 \left((\mathbf{x}_i[0])_{i=1}^N;(\mathbf{u}_i[0:+\infty])_{i=1}^N\right) \leq V_0((\mathbf{x}_i[0])_{i=1}^N)+\varepsilon.
\end{equation}
\end{proposition}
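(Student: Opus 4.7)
The strategy is to mimic the proof of Theorem~\ref{tho:2} as closely as possible, and to identify precisely the two places where the dual-decomposition argument used properties that no longer hold for ADMM. Those two gaps are exactly what the new hypotheses~(\ref{eqn:condition:ADMM:convergence}) and~(\ref{eqn:1:tho:3:1}) are designed to patch: the former replaces the weak-duality inequality $V_k^{(T),(S_k)} \le V_k^{(T)}$ (which held automatically in the dual case but not for the augmented-Lagrangian iterates here), while the latter plays the role of~(\ref{eqn:0:tho:2}) without appealing to that missing duality bound.

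First I would reproduce, by induction on $k$, the closed form
\[
e[k]=\alpha \sum_{t=0}^{k-1}\sum_{i=1}^N \boldsymbol{\ell}_i(\mathbf{x}_i[t],\mathbf{w}_i[t];\mathbf{u}_i[t]) + \tilde{V}_k((\mathbf{x}_i[k])_{i=1}^N) - V_0^{(T),(S_0)}((\mathbf{x}_i[0])_{i=1}^N),
\]
exactly as in~(\ref{eqn:proof:1}); the base case uses the explicit expression for $e[1]$ and the inductive step uses the given difference equation for $e[k]$. Substituting this into~(\ref{eqn:1:tho:3:2}) and rearranging yields
\[
\alpha \sum_{t=0}^{k}\sum_{i=1}^N \boldsymbol{\ell}_i(\mathbf{x}_i[t],\mathbf{w}_i[t];\mathbf{u}_i[t]) \le V_k^{(T),(S_k)}((\mathbf{x}_i[k])_{i=1}^N) - \tilde{V}_k((\mathbf{x}_i[k])_{i=1}^N) - \tilde{V}_{k+1}((\mathbf{x}_i[k+1])_{i=1}^N) + V_0^{(T),(S_0)}((\mathbf{x}_i[0])_{i=1}^N),
\]
which is the ADMM analogue of~(\ref{eqn:proof:2}).

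Next I would apply~(\ref{eqn:1:tho:3:1}) directly, at time-step $k$, to conclude that $V_k^{(T),(S_k)} - \tilde{V}_k \le 0$; this is the step where Theorem~\ref{tho:2} used the chain $V_k^{(T),(S_k)} \le V_k^{(T)} \le \tilde{V}_k$ obtained via weak duality, and where the present statement must simply assume the combined inequality. Using also $\tilde{V}_{k+1} \ge 0$, the telescoped bound collapses to
\[
\alpha \sum_{t=0}^{k}\sum_{i=1}^N \boldsymbol{\ell}_i(\mathbf{x}_i[t],\mathbf{w}_i[t];\mathbf{u}_i[t]) \le V_0^{(T),(S_0)}((\mathbf{x}_i[0])_{i=1}^N),
\]
and letting $k \to \infty$ on the left-hand side gives $\alpha J_0 \le V_0^{(T),(S_0)}((\mathbf{x}_i[0])_{i=1}^N)$.

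The final step, and the one that genuinely distinguishes this result from Theorem~\ref{tho:2}, is to bound $V_0^{(T),(S_0)}$ by $V_0 + \varepsilon$. Here I would invoke~(\ref{eqn:condition:ADMM:convergence}) at $k=0$: since $V_0^{(T),(S_0)} \to V_0^{(T)}$ as $S_0 \to \infty$, for any prescribed $\varepsilon > 0$ we may pick $S_0$ large enough that $V_0^{(T),(S_0)} \le V_0^{(T)} + \varepsilon$. Combining this with the monotonicity $V_0^{(T)} \le V_0$ (longer planning horizons produce larger costs in the absence of a terminal cost, as noted after~(\ref{eqn:proof:2})) delivers~(\ref{eqn:costguarantee3}). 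The main obstacle, and the conceptual content of the proof, is simply isolating these two uses of duality in the Theorem~\ref{tho:2} argument and observing that replacing them with~(\ref{eqn:1:tho:3:1}) and a suitably large finite choice of $S_0$ justified by~(\ref{eqn:condition:ADMM:convergence}) yields exactly the stated $\varepsilon$-relaxed sub-optimality guarantee.
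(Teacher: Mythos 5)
Your proposal is correct and follows essentially the same route as the paper, whose own proof simply says ``similar to Theorem~\ref{tho:2}'' to obtain $\alpha \sum_{t=0}^{k}\sum_{i=1}^N \boldsymbol{\ell}_i(\mathbf{x}_i[t],\mathbf{w}_i[t];\mathbf{u}_i[t]) \leq V_0^{(T),(S_0)}((\mathbf{x}_i[0])_{i=1}^N)$ and then invokes the limit~(\ref{eqn:condition:ADMM:convergence}) at $k=0$. You in fact supply more detail than the paper does, correctly isolating the two points where weak duality must be replaced by hypotheses~(\ref{eqn:1:tho:3:1}) and~(\ref{eqn:condition:ADMM:convergence}).
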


\begin{proof} Similar to Theorem~\ref{tho:2}, we can prove that
\begin{equation*}
\begin{split}
\alpha \sum_{t=0}^{k}\sum_{i=1}^N \boldsymbol{\ell}_i(\mathbf{x}_i[t],\mathbf{w}_i[t];\mathbf{u}_i[t]) &\leq V_0^{(T),(S_0)}((\mathbf{x}_i[0])_{i=1}^N).
\end{split}
\end{equation*}
Now, noticing that $\lim_{S_0\rightarrow \infty}V_0^{(T),(S_0)}((\mathbf{x}_i[0])_{i=1}^N) =V_0^{(T)}((\mathbf{x}_i[0])_{i=1}^N)$, the rest of the proof follows.
\hfill$\square$
\end{proof}

Note that in general it is not easy to verify condition~(\ref{eqn:condition:ADMM:convergence}). However, we believe it is interesting to be able to provide guarantees for the closed-loop performance of distributed MPC in the case where condition~(\ref{eqn:condition:ADMM:convergence}) holds.

It is important to show that, for an appropriate family of mappings, we can satisfy condition~(\ref{eqn:1:tho:3:1}) with finite number of iterations. Let us use a similar approach as the one we used in Subsection~\ref{subsec:dualdecomp} for calculating a family of mappings~$\{\tilde{V}_k\}_{k=1}^\infty$. Doing so, for each time-step $k\in\mathbb{Z}_{\geq 1}$, we define
$$
\tilde{V}_k((\mathbf{x}_i[k])_{i=1}^N)=J_k\left((\mathbf{x}_i[k])_{i=1}^N; \textbf{q}_T(\hat{\mathbf{u}}_i^{(S_{k-1})}[k-1:T+k-1])\right)+\vartheta_k,
$$
where the sequence $\{\vartheta_k\}_{k=0}^\infty$ is fixed and such that $\vartheta_k>0$ for all $k\in\mathbb{Z}_{\geq 0}$. Hence, we get $\tilde{V}_k((\mathbf{x}_i[k])_{i=1}^N)\geq V^{(T)}((\mathbf{x}_i[k])_{i=1}^N)$. Considering that  $\lim_{s\rightarrow +\infty} V_k^{(T),(s)}((\mathbf{x}_i[k])_{i=1}^N) =V^{(T)} \linebreak[4] ((\mathbf{x}_i[k])_{i=1}^N),$ there always exists $S'_k\in\mathbb{Z}_{\geq 0}$ such that
\begin{equation} \label{eqn:new:condition}
\tilde{V}_{k}((\mathbf{x}_i[k])_{i=1}^N)-V^{(T)}((\mathbf{x}_i[k])_{i=1}^N) \geq V_k^{(T),(s)}((\mathbf{x}_i[k])_{i=1}^N)-V^{(T)}((\mathbf{x}_i[k])_{i=1}^N),
\end{equation}
for all $s\geq S'_k$. Note that the inequality in~(\ref{eqn:new:condition}) is equivalent to the inequality in~(\ref{eqn:1:tho:3:1}). The sequence $\{\vartheta_k\}_{k=0}^\infty$ is a design parameter. If $\vartheta_k$ are chosen relatively small, it is difficult to satisfy~(\ref{eqn:1:tho:3:1}) with a few iteration, whereas if $\vartheta_k$ are chosen relatively large, it is difficult to satisfy~(\ref{eqn:1:tho:3:2}) because $e[k]$ would become large.

\begin{figure}[t]
\centering
\includegraphics[width=.6\linewidth]{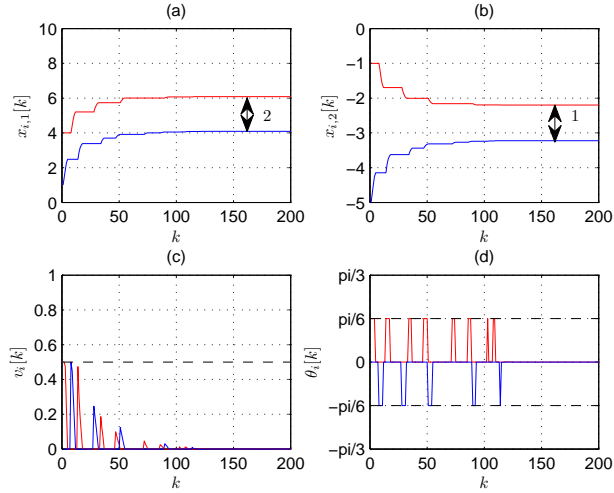}
\caption{\label{figure1} Trajectory and control signal of two vehicles when using Procedure~\ref{alg:1} and termination law described in Theorem~\ref{tho:2}. }
\end{figure}
\begin{figure}[t]
\centering
\includegraphics[width=.6\linewidth]{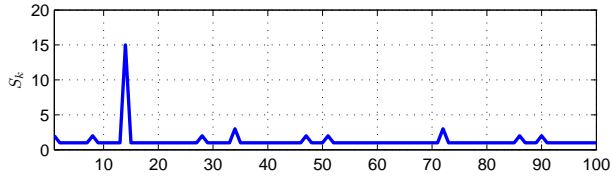}
\caption{\label{figureiterationnumber} Iteration numbers $S_k$ versus time-step~$k$ for the simulation results in Figure~\ref{figure1}. }
\end{figure}

\section{Simulations}\label{sec:sim}
In this section, we portray the applicability of the algorithm to a formation acquisition problem. We assume that the nonholonmic vehicle~$i$, for each $i=1,\dots,N$, can be described in state-space representation as
$$
\mathbf{x}_i[k+1]=\mathbf{x}_i[k]+\matrix{c}{\mathbf{v}_i[k]\cos(\theta_i[k]) \\ \mathbf{v}_i[k]\sin(\theta_i[k])},
$$
where $\mathbf{x}_i[k]=\matrix{cc}{x_{i,1}[k] & x_{i,2}[k]}^\top\in\mathbb{R}^2$ is the position of the vehicle, $\mathbf{v}_i[k]\in\mathbb{R}$ is its velocity, and $\theta_i[k]\in\mathbb{R}$ is its steering-wheel angle. Because of the vehicles' mechanical constraints, i.e., bounded speed and steering angle, we assume that the control inputs should always remain bounded as $0\leq \mathbf{v}_i[k]\leq 0.5$ and $|\theta_i[k]|\leq \pi/6$ for all $k\in\mathbb{Z}_{\geq 0}$.
We define each vehicle control input as $\mathbf{u}_i[k]=\matrix{cc}{\mathbf{v}_i[k] & \theta_i[k] }^\top $. Let us start with two vehicles. At each time-step $k\in\mathbb{Z}_{\geq 0}$, these vehicles are interested in minimizing the cost function
\begin{equation*}
\begin{split}
J_k^{(T)}((\mathbf{x}_i[k])_{i=1}^2;(\mathbf{u}_i[k:k+T])_{i=1}^2)=\sum_{t=k}^{k+T} \bigg[ 2\|\mathbf{x}_1[t]-\mathbf{x}_2[t]-\mathbf{d}_{12}\|_2^2+10(v_1^2[t]+v_2^2[t])\bigg],
\end{split}
\end{equation*}
where $\mathbf{d}_{12}=\matrix{cc}{2 & 1}^\top$. Let us fix the starting points of the vehicles as
$$
\mathbf{x}_1[0]=\matrix{c}{+4.0 \\ -1.0},\hspace{.3in}\mathbf{x}_2[0]=\matrix{c}{+1.0 \\ -5.0}.
$$
We also fix the planning horizon $T=5$. We use Procedure~\ref{alg:1} to calculate the sequence of sub-optimal control signals when the termination law (i.e., iteration number $S_k$) is given by Theorem~\ref{tho:2}. Figure~\ref{figure1} illustrates the trajectory and the control signals of both vehicles with finite-horizon planning when the sub-optimality parameter is fixed at $\alpha=0.5$. To be precise, Figures~\ref{figure1}(a,b) portray different coordinates of the vehicle position while Figures~\ref{figure1}(c,d) illustrate the velocities and steering-wheel angels of the vehicles, respectively. The red color denotes the first vehicle and the blue color denotes the second one. The portrayed simulation is done for 200 time-steps. It is interesting to note that over the first 100 time-steps, in average 1.25~iterations per time-step were used in Procedure~\ref{alg:1} to extract the sub-optimal control signal (see Figure~\ref{figureiterationnumber}). Figure~\ref{figure2} illustrates the trajectory and the control signals of both vehicles with finite-horizon planning using a centralized optimization algorithm as a reference. We also check the influence of $\alpha$. To do so, we introduce some notations. For each time-step~$k$, we define $(\mathbf{u}_i^{\mathrm{Primal}}[k])_{i=1}^2
=(\hat{\mathbf{u}}_i[k])_{i=1}^2$ where
\begin{equation*}
\begin{split}
(\hat{\mathbf{u}}_i[k:k+T])_{i=1}^2=\argmin_{(\hat{\mathbf{u}}_i[k:k+T]\in\mathcal{U}_i)_{i=1}^2} J_k^{(T)}((\mathbf{x}_i[k])_{i=1}^2;(\hat{\mathbf{u}}_i[k:k+T])_{i=1}^2).
\end{split}
\end{equation*}
Similarly, we define $(\mathbf{u}_i^{\mathrm{Dual}}[k])_{i=1}^2= (\hat{\mathbf{u}}_i^{(S_k)}[k])_{i=1}^2$ where, for each vehicle, $\hat{\mathbf{u}}_i^{(S_k)}[k]$ is calculated using Procedure~\ref{alg:1} when the dual decomposition iteration numbers $\{S_k\}_{k=0}^\infty$ is extracted from Theorem~\ref{tho:2}. Now, we define the ratio
$$
\rho=\frac{J_0^{(H)}\left((\mathbf{x}_i[0])_{i=1}^2;(\mathbf{u}_i^{\mathrm{Dual}}[0:H])_{i=1}^2\right)} {J_0^{(H)}\left((\mathbf{x}_i[0])_{i=1}^2;(\mathbf{u}_1^{\mathrm{Primal}}[0:H])_{i=1}^2\right)},
$$
where $H$ is the simulation horizon. Table~\ref{table1} shows $\rho$ as a function of $\alpha$ for $H=1000$. Based on this table, we can numerically verify the claim of Theorem~\ref{tho:2} that using Procedure~\ref{alg:1} when the dual decomposition iteration numbers is extracted from~(\ref{eqn:1:tho:2}) provides a suboptimality ratio~$\rho$ that is inversely proportional to the design parameter~$\alpha$.

\begin{figure}[t]
\centering
\includegraphics[width=.6\linewidth]{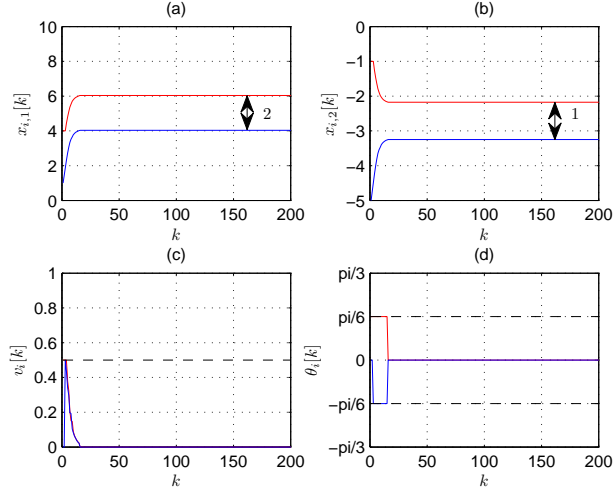}
\caption{\label{figure2} Trajectory and control signal of two vehicles when using a centralized optimization algorithm. }
\end{figure}

\begin{table}[t]
\centering
\caption{\label{table1} Sub-optimality ratio as function of $\alpha$. }
\footnotesize
\setlength{\extrarowheight}{3pt}
\begin{tabular}{|c|c|c|c|c|}
\hline
$\alpha$ & $0.1$ & $0.3$ & $0.5$ & $0.7$ \\
\hline\hline
\hspace{.1in} $\rho$ \hspace{.1in}
& \hspace{.1in}9.7875\hspace{.1in} & \hspace{.1in}3.2725\hspace{.1in} & \hspace{.1in}1.9684\hspace{.1in} & \hspace{.1in}1.4198\hspace{.1in} \\
\hline
\end{tabular}
\end{table}

\begin{figure}[t]
\centering
\includegraphics[width=.6\linewidth]{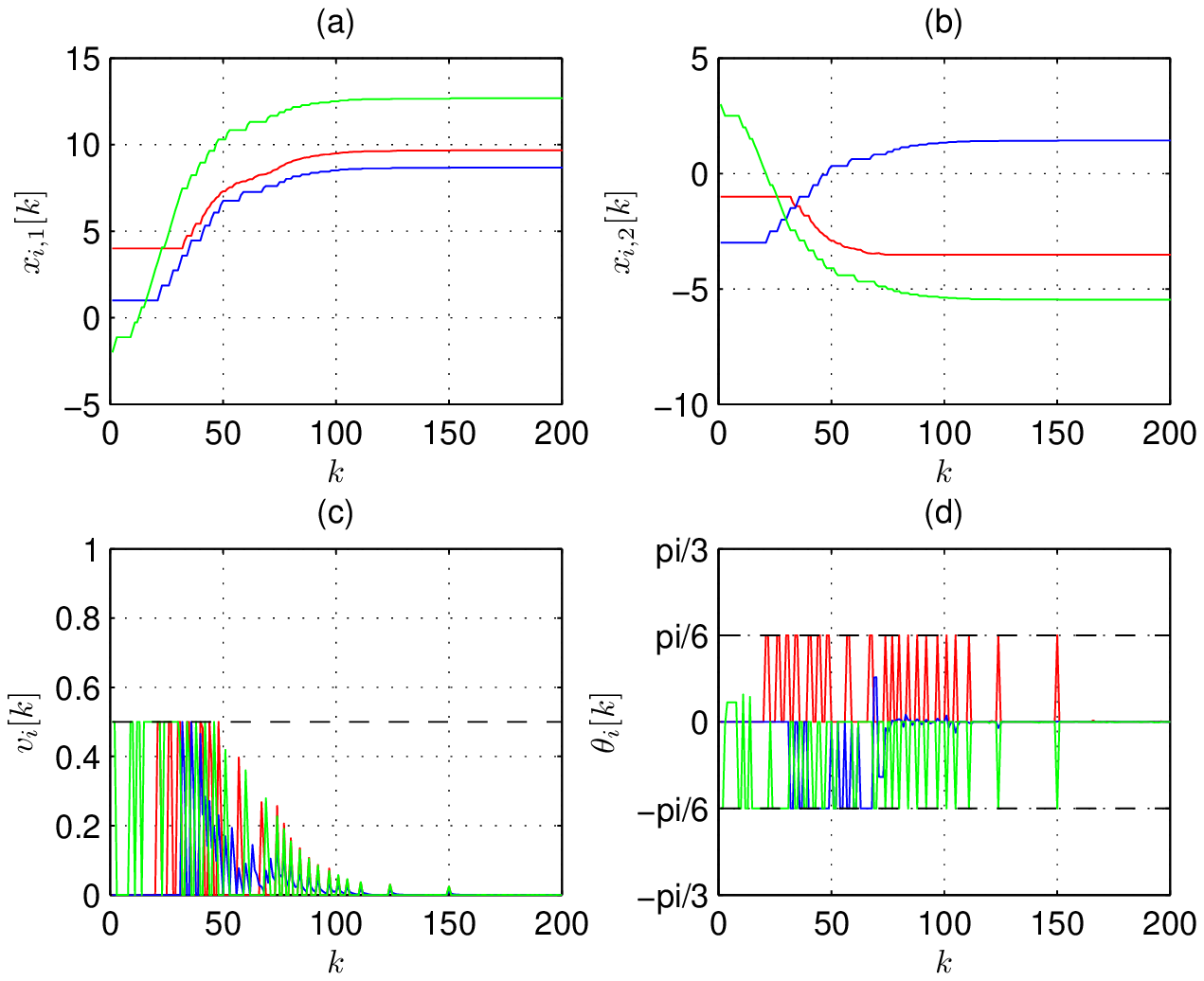}
\caption{\label{figure3} Trajectory and control signal of three vehicles when using Procedure~\ref{alg:1} and termination law described in Theorem~\ref{tho:2}. }
\end{figure}

\begin{figure}[t]
\centering
\includegraphics[width=.6\linewidth]{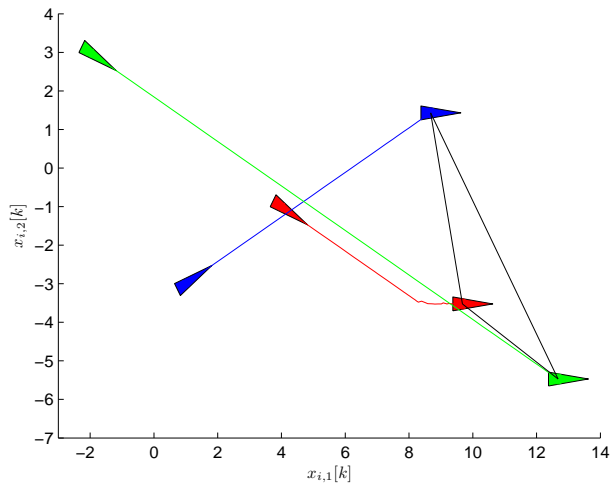}
\caption{\label{figuretrajectory} Trajectory of the vehicles in the 2-D plane. }
\end{figure}

These simulations can be readily extended to more vehicles. Figure~\ref{figure3} illustrates the trajectory and the control signals of three vehicles when trying to minimize the cost function
\begin{equation*}
\begin{split}
J_k^{(T)}((\mathbf{x}_i[k])_{i=1}^3;(\mathbf{u}_i[k:k+T])_{i=1}^3)=&\sum_{t=k}^{k+T} \bigg[ 2\|\mathbf{x}_1[t]-\mathbf{x}_2[t]-\mathbf{d}_{12}\|_2^2+
2\|\mathbf{x}_1[t]-\mathbf{x}_3[t]-\mathbf{d}_{13}\|_2^2\\&\hspace{.3in}+
2\|\mathbf{x}_2[t]-\mathbf{x}_3[t]-\mathbf{d}_{23}\|_2^2+
10(v_1^2[t]+v_2^2[t]+v_2^2[t])\bigg],
\end{split}
\end{equation*}
with
$$
\mathbf{d}_{12}=\matrix{c}{+1 \\ -5}, \hspace{.2in} \mathbf{d}_{13}=\matrix{c}{-3 \\ +2}, \hspace{.2in}
\mathbf{d}_{23}=\matrix{c}{-4 \\ +7}.
$$
Let us fix the starting points of the vehicles as
$$
\mathbf{x}_1[0]=\matrix{c}{+4.0 \\ -1.0},\hspace{.2in}\mathbf{x}_2[0]=\matrix{c}{+1.0 \\ -3.0},\hspace{.2in}\mathbf{x}_2[0]=\matrix{c}{-2.0 \\ +3.0}.
$$
We consider planning horizon $T=3$. As before, we use Procedure~\ref{alg:1} to calculate the sequence of sub-optimal control signals when the termination law (i.e., iteration number $S_k$) is given by Theorem~\ref{tho:2} and the sub-optimality parameter is $\alpha=0.2$. The red, blue, and green colors denotes the first, second, and third vehicle, respectively. Figure~\ref{figuretrajectory} portrays the trajectory of the vehicles in the 2-D plane. The final formation is illustrated by the triangle in black color. The codes to generate the results of this section can be found at~\cite{SimulationCodeOnline}.

We conclude this section by briefly noting that the system consisting of $N$ agents under the aforementioned cost function converges to the desired formation if and only if $\mathcal{G}^C$ is connected. This is a direct consequence of the structural rigidity of the desired formation, for more information see \cite{servatius1999constraining}. Other cost functions, such as
\begin{equation*}
\begin{split}
J_k^{(T)}((\mathbf{x}_i[k])_{i=1}^N;(\mathbf{u}_i&[k:k+T])_{i=1}^N)=\sum_{t=k}^{k+T} \bigg[  \sum\limits_{(i,j)\in\mathcal{E}^C} \big( \|\mathbf{x}_i[t]-\mathbf{x}_j[t]\|_2^2-\|\mathbf{d}_{ij}\|_2^2\big)^2+\sum_{i=1}^N v_i^2[t]\bigg],
\end{split}
\end{equation*}
can be considered as well. In this case, the system converges to the desired formation if and only if the formation is globally rigid with $N\geq 4$, see~\cite{krick2009stabilisation,anderson2010formal} and references therein.

\section{Conclusions} \label{sec:conclusion}
In this paper, we considered the dual decomposition formulation of a distributed MPC problem for systems with arbitrary dynamical couplings. More specifically, we studied the problem of calculating  performance bounds on the solution obtained from iteratively solving the dual problem in a distributed way when the iterations are terminated after $S_k$ steps at time-step $k$. Later, we commented on how the problem can be cast in an ADMM setting. We demonstrated the validity of the proposed performance bound through simulations on formation acquisition by a group of nonholonomic agents. As a future research direction, one might consider providing better performance bounds for the case where ADMM is implemented to solve the MPC problem.

\bibliographystyle{ieeetr}
\bibliography{bibfile}

\end{document}